\documentclass{amsart}
\usepackage{mathtools}
\usepackage{tikz}
\usepackage{hyperref}

\theoremstyle{theorem}
\newtheorem{theorem}{Theorem}

\theoremstyle{definition}

\begin{document}

\title[Signed Sums of Non-Integer Powers]{An extension of the Prouhet-Tarry-Escott problem to non-integer powers}
% \markright{Signed sums of non-integer powers}
\author{David Treeby}
\author{Edward Wang}

\maketitle

\begin{abstract}
We present an extension of the Prouhet-Tarry-Escott problem by demonstrating that signed sums of non-integer powers of consecutive integers can be made arbitrarily close to zero. 
\end{abstract}

\section{Introduction}
The Prouhet–Tarry–Escott problem \cite{wright,prouhet} seeks to divide a set $S$ of integers into two disjoint subsets $S_1$ and $S_2$ so that the equation \[
  \sum_{x \in S_1} x^j = \sum_{x \in S_2} x^j
\] holds true for all integers $j$ from $0$ to $k$. 
It is known (but insufficiently well-known) that this condition can be satisfied if $S$ is any sequence of $2^{k+1}$ consecutive integers. For example, if $x_1,x_2,\dots,x_{16}$ is any sequence of sixteen consecutive integers, 
\[
\arraycolsep=1.2pt\def\arraystretch{1.2}
\begin{array}{cccccccccc}
+&x_1^j&-&x_2^j&-&x_3^j&+&x_4^j\\
-&x_5^j&+&x_6^j&+&x_7^j&-&x_8^j\\
-&x_9^j&+&x_{10}^j&+&x_{11}^j&-&x_{12}^n\\
+&x_{13}^j&-&x_{14}^j&-&x_{15}^j&+&x_{16}^j&=&0
\end{array}
\]
when $j$ is equal to $0,1,2$ or $3$.
%This is a special case of the more general Terry-Escott problem, solved by  Prouhet in 1851. %
This identity leads to the following remarkable fact. Given any set of sixteen cubes whose side lengths are consecutive integers, we can partition the cubes into two sets so that in each set:
\begin{enumerate}
    \item the total number of cubes is the same,
    \item the sum of their side lengths is the same,
    \item the sum of their surface areas is the same,
    \item the sum of their volumes is the same. 
\end{enumerate}
This beautiful result follows from a simple argument that we recreate below.  Following this, we show how the result can be generalized to (certain) non-integer values of $j$. Indeed, such signed sums can be made as close to zero as desired so long as the smallest of these consecutive integers is sufficiently large. For example,
\begin{align*}
+\sqrt{2009}-\sqrt{2010}-\sqrt{2011}+\sqrt{2012}&\\
-\sqrt{2013}+\sqrt{2014}+\sqrt{2015}-\sqrt{2016}&\\
-\sqrt{2017}+\sqrt{2018}+\sqrt{2019}-\sqrt{2020}&\\
+\sqrt{2021}-\sqrt{2022}-\sqrt{2023}+\sqrt{2024}&\approx -0.000000000163.
\end{align*}

\section{Non-negative Integer powers}

The results in this note stem from one simple insight: if $f$ is a polynomial of degree $n$, then $f(x+c) - f(x)$ is a polynomial of degree $n-1$, for any $c\neq 0$. This allows us to define a sequences of polynomials with diminishing degrees. Specifically, for the non-negative integer $j$ we define \(f^j_n(x)= f^j_{n-1}(x)-f^j_{n-1}(x+2^{n-1})\)
with initial term  $f^j_0(x)=(x+1)^j$. To illustrate this, the next two terms are
\begin{align*}
    f^j_1(x)=f^j_{0}(x)-f^j_{0}(x+1)&=+\,(x+1)^j-(x+2)^j,\\
    f^j_2(x)=f^j_{1}(x)-f^j_{1}(x+2)&=+\,(x+1)^j-(x+2)^j-(x+3)^j+(x+4)^j.
\end{align*}
In general, we find that \[
  f^j_n(x)=\sum_{k=1}^{2^n} {s_k(x+k)^j}
\] where $(s_k)_{k=1}^\infty=(1,-1,-1,1,-1,1,1,-1,\dots)$ is the \emph{Thue-Morse} sequence on the integers $1$ and $-1$ \cite{Allouche}. Also note that the degrees of the polynomials in the sequence diminish:
\begin{align*}
\deg f_0^j&=j,\\
\deg f_1^j&=j-1,\\
\deg f_2^j&=j-2,\\
          &\vdotswithin{=} \\
\deg f_j^j&=0.
\end{align*}
We collect these observations in the following theorem.
\begin{theorem}\label{thm1}
The degree of $ f^j_n$ is $j-n$. In particular, $f^j_j$ is a non-zero constant polynomial, and $ f^j_{n}$ is the zero polynomial provided $j<n$. 
\end{theorem}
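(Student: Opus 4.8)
The plan is a straightforward induction on $n$, resting entirely on the insight flagged just before the statement. First I would record the precise form of that insight: if $g$ is a polynomial of degree $d\ge 1$ with leading coefficient $b$, then for any constant $c\ne 0$ the polynomial $g(x)-g(x+c)$ has degree exactly $d-1$, with leading coefficient $-bdc$. This is a one-line check: the $x^d$ terms of $g(x)$ and $g(x+c)$ cancel, while the $x^{d-1}$ coefficient of $g(x)-g(x+c)$ works out to $-bdc$, which is nonzero precisely because $b\ne 0$, $d\ge 1$, and $c\ne 0$. I would also note the degenerate case: if $d=0$, i.e.\ $g$ is a nonzero constant, then $g(x)-g(x+c)$ is identically zero, and the zero polynomial of course stays zero under any further difference.

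For the base case, $f^j_0(x)=(x+1)^j$ has degree $j=j-0$. For the inductive step, suppose the claim holds for $f^j_{n-1}$. Apply the recursion $f^j_n(x)=f^j_{n-1}(x)-f^j_{n-1}(x+2^{n-1})$ with $c=2^{n-1}\ne 0$. If $n\le j$, then by hypothesis $\deg f^j_{n-1}=j-(n-1)\ge 1$, so $\deg f^j_n=(j-n+1)-1=j-n$. If $n=j+1$, then $f^j_{n-1}$ is a nonzero constant, so $f^j_n$ is the zero polynomial. If $n>j+1$, then $f^j_{n-1}$ is already the zero polynomial by hypothesis, hence so is $f^j_n$. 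This exhausts all cases, proving the theorem; taking $n=j$ gives the ``nonzero constant'' assertion and $n>j$ gives the ``zero polynomial'' assertion.

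The argument has no real obstacle — it is pure bookkeeping — but the one point requiring care is to invoke the degree-dropping fact only while the degree is positive, treating the crossover at $n=j$ (degree exactly $0$) and $n=j+1$ (the first zero polynomial) as separate cases rather than mechanically repeating ``the degree drops by one.'' If one wants the nonvanishing of the constant $f^j_j$ to be completely transparent, I would instead propagate the leading coefficient through the recursion: step $m$ multiplies it by $-(j-m+1)2^{m-1}$, so for $n\le j$ the leading coefficient of $f^j_n$ is
\[
  (-1)^n\,2^{\binom{n}{2}}\,\frac{j!}{(j-n)!},
\]
which is manifestly nonzero, and equals $(-1)^j\,2^{\binom{j}{2}}\,j!$ when $n=j$.
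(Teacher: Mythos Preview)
Your proof is correct and follows exactly the approach the paper uses: the paper's ``proof'' is simply the observation preceding the theorem that taking a difference $f(x)-f(x+c)$ drops the degree by one, iterated along the recursion, and you have formalized this as a clean induction on $n$ with the edge cases handled explicitly. Your additional computation of the leading coefficient $(-1)^n 2^{\binom{n}{2}} j!/(j-n)!$ is a nice bonus that the paper does not include but which makes the nonvanishing of $f^j_j$ completely explicit.
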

The result given in the introduction then follows from Theorem \ref{thm1} by letting $n=4$ and noting that $f_4^0,f_4^1,f_4^2$ and $f_4^3$ are all equal to the zero polynomial.  

\section{Non-integer powers}
The previous theorem shows that $ f^j_n(x)=0$ for all $x$ given non-negative integers $n$ and $j$ satisfying $n>j$. Our aim is to generalise Theorem \ref{thm1}, yielding a result that is applicable when $j$ is not an integer. The desired generalization is stated in Theorem \ref{thm3}. To prove this, we will require the following famous result, the proof of which can be found in most analysis textbooks \cite{knuth}. 
\begin{theorem}[Newton's generalized binomial theorem] If $j\geq 0$ is not an integer and $-1 < x < 1$, then 
\[ (1+x)^j = \sum_{i=0}^{\infty} \binom{j}{i}x^i. \]
\end{theorem}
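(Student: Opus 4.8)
The cleanest route avoids estimating Taylor remainders and instead characterizes $(1+x)^j$ as the unique solution of a first-order linear differential equation. Set
\[
  g(x) = \sum_{i=0}^{\infty}\binom{j}{i}x^i ,
\]
and note first that, since $j$ is a positive non-integer, every coefficient $\binom{j}{i}=\tfrac{j(j-1)\cdots(j-i+1)}{i!}$ is nonzero; hence the ratio of consecutive coefficients is $\bigl|\binom{j}{i+1}/\binom{j}{i}\bigr| = |j-i|/(i+1)\to 1$, so the power series has radius of convergence exactly $1$. Consequently $g$ is well defined and infinitely differentiable on $(-1,1)$, with derivative obtained by differentiating term by term. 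This is the only place the hypothesis $-1<x<1$ enters, and every manipulation below rests on it.

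The plan then has two short steps. First I would verify that $g$ solves the initial value problem $(1+x)\,g'(x)=j\,g(x)$ with $g(0)=1$. The initial condition is immediate. For the differential equation, differentiate $g$ term by term, reindex $\sum_i i\binom{j}{i}x^{i-1}$ as $\sum_i (i+1)\binom{j}{i+1}x^i$, and collect powers of $x$; matching coefficients reduces the claimed identity to $(i+1)\binom{j}{i+1}=(j-i)\binom{j}{i}$, which is exactly the defining recurrence for the generalized binomial coefficient. Second, I would compare $g$ with $(1+x)^j$, which is smooth and nowhere zero on $(-1,1)$: the function $h(x)=g(x)(1+x)^{-j}$ satisfies $h'(x)=(1+x)^{-j-1}\bigl[(1+x)g'(x)-j\,g(x)\bigr]=0$, so $h$ is constant, and $h(0)=1$ forces $g(x)=(1+x)^j$ throughout $(-1,1)$.

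The main obstacle here is not conceptual but bookkeeping: one must cite the theorem that a power series is differentiable on the interior of its interval of convergence with the expected derivative, and make sure the reindexing step is legitimate (it is, term by term, inside the radius of convergence). An alternative would be the bare-hands Taylor approach---write $(1+x)^j=\sum_{i<N}\binom{j}{i}x^i+R_N(x)$ and show $R_N(x)\to 0$---but for $x$ near $-1$ the Lagrange form of the remainder does not visibly vanish and one is pushed into the Cauchy form, so I would avoid it. Because the statement only asserts the identity on the open interval $-1<x<1$, there is no endpoint to worry about and no need for Abel's theorem; the differential-equation argument is therefore both shortest and cleanest.
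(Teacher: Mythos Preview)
Your argument is correct: the ratio test gives radius of convergence $1$, term-by-term differentiation inside that radius is justified by the standard theorem you cite, the coefficient identity $(i+1)\binom{j}{i+1}=(j-i)\binom{j}{i}$ is exactly the recurrence for generalized binomial coefficients, and the quotient $h(x)=g(x)(1+x)^{-j}$ then has zero derivative and value $1$ at the origin. Nothing is missing.

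As for comparison with the paper: the paper does not prove this theorem at all. It quotes the result as classical, refers the reader to a textbook, and then records the remainder estimate $(1+x)^j-\sum_{i\le n}\binom{j}{i}x^i=O(x^{n+1})$, remarking that ``any proof \dots\ will essentially rest on'' controlling this difference. That framing points toward the very Taylor-remainder route you deliberately set aside. Your differential-equation approach is a genuine alternative: it sidesteps the delicate Cauchy-form remainder analysis near $x=-1$ that you flag, at the cost of invoking the differentiability of power series and the uniqueness of solutions to first-order linear ODEs. For the paper's purposes, however, the remainder formulation is what is actually used downstream (in the proof of their Theorem~3), so while your proof of the identity is clean and self-contained, the $O(x^{n+1})$ estimate would still need to be extracted separately---which is straightforward from absolute convergence once the identity is in hand.
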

\noindent Any proof of this theorem will essentially rest on the fact that the difference between $(1+x)^j$ and the partial sum for the series can be estimated: 
\begin{equation}\label{interestingequation}
(1+x)^j-\sum_{i=0}^{n} \binom{j}{i}x^i = \sum_{i=n+1}^{\infty} \binom{j}{i}x^i =O(x^{n+1}).
\end{equation}
\begin{theorem}\label{thm3}
If $n$ is a non-negative integer and $j\in[0,n)$, then $ f^j_n(x)\to 0$ as $x\to\infty$. 
\end{theorem}
\begin{proof}
Suppose $j\in[0,n)$ is a real number. Using the binomial theorem, we find that 
\begin{align*}
  f^j_n(x)&=\sum_{k=1}^{2^n} {s_k(x+k)^j} \\
          &=x^j\sum_{k=1}^{2^n} {s_k\left(1+\frac{k}{x}\right)^j} \\
          &=x^j\sum_{k=1}^{2^n} s_k \left[\sum_{i=0}^\infty{\binom{j}{i}\left(\frac{k}{x}\right)^i}\right].
\end{align*}
As one of the summations is finite, we can reverse the order of summation without need for caution. Moreover, $f^i_n(0)=\sum_{k=1}^{2^n} {s_k k^i}$. Therefore
\begin{align*}
  f^j_n(x)&=x^j \sum_{i=0}^\infty{\frac{1}{x^i}\binom{j}{i}\left[\sum_{k=1}^{2^n} s_k k^i\right]} \\ &=x^j \sum_{i=0}^\infty{\frac{1}{x^i}\binom{j}{i}f^i_n(0).}
\end{align*}
Crucially, Theorem \ref{thm1} ensures that $f^i_n(0)=0$ for all integers $i<n$. Using this fact, and then reversing the order of summation once more gives
\begin{align*}
  f^j_n(x)&=x^j \sum_{i=n}^\infty{\frac{1}{x^i}\binom{j}{i}f^i_n(0)} \\
          &= x^j \sum_{i=n}^\infty{\binom{j}{i}\frac{1}{x^i}\sum_{k=1}^{2^n} {s_k k^i}}\\
          &=x^j\sum_{k=1}^{2^n} s_k \left[\sum_{i=n}^\infty{\binom{j}{i}\left(\frac{k}{x}\right)^i}\right].
\end{align*}
The term in parentheses is the remainder term in a (generalized) binomial expansion as seen in  \eqref{interestingequation}. Moreover, as the outer summation is finite we conclude that 
\[
f^j_n(x)=x^j O(1/x^n)=O(x^{j-n}).
\]
Therefore, this shows that $f^j_n(x)\to 0$ as $x\to\infty$ provided $n>j$.
\end{proof}
\newpage
In Figure \ref{fig:1} we can observe the behavior of $f^j_3(x)$ for $j\in[0,3]$. 
\begin{figure}[!htbp]
  \centering
  \begin{tikzpicture}
    \node[inner sep=0pt] at (0,0)
      {\includegraphics[width=.5\textwidth]{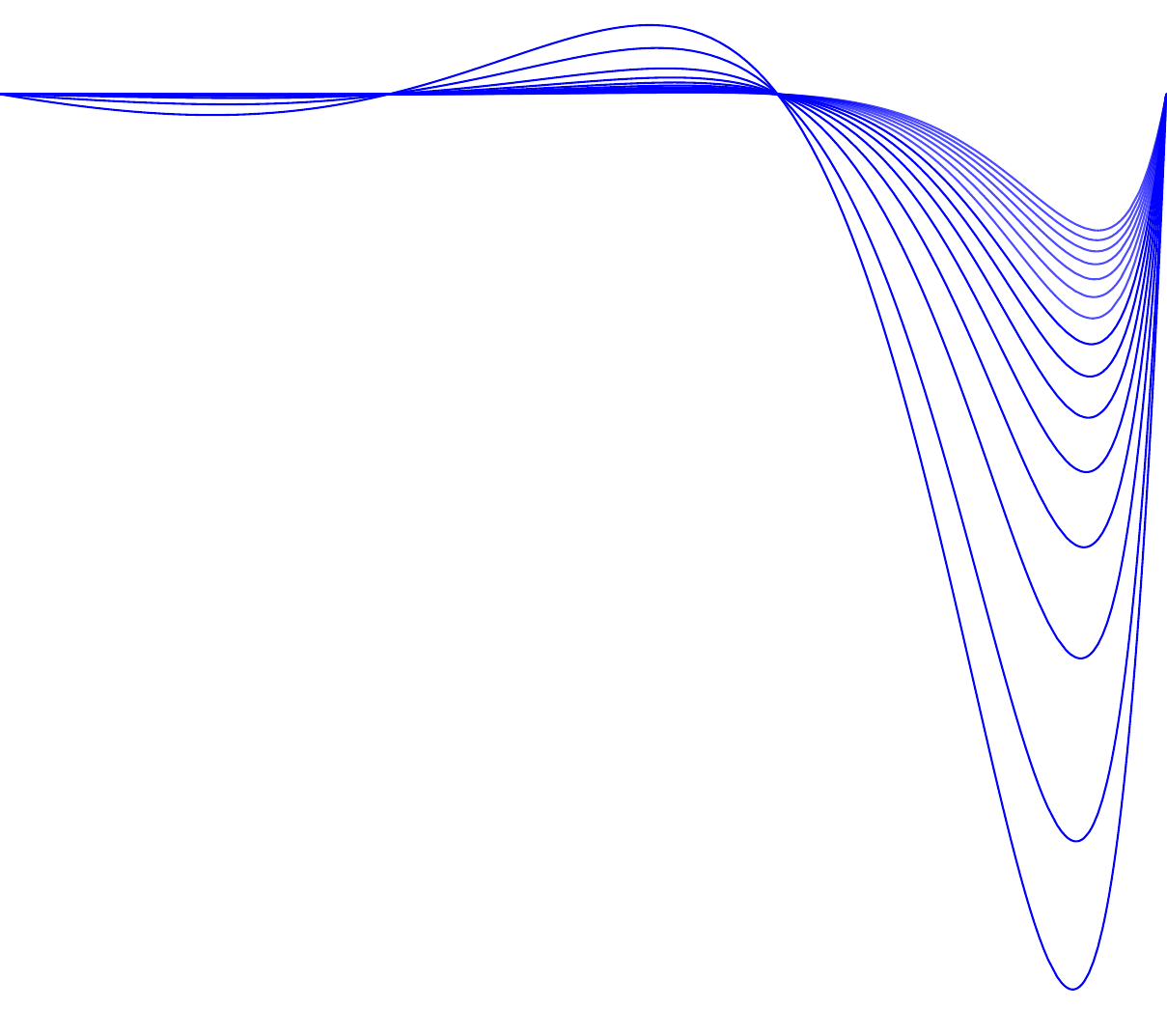}};
    \draw[->] (-2,2.3)--(3.5,2.3) node[right]{$j$};
    \fill (-3.,2.3) circle(1.75pt) node[anchor=south east]{$0$};
    \fill (-1,2.3) circle(1.75pt) node[above]{$1$};
    \fill (1.05,2.3) circle(1.75pt) node[above]{$2$};
    \fill (3.15,2.3) circle(1.75pt) node[above]{$3$};
    \draw[->] (-3,-2.8)--(-3,3)node[above]{$f^j_3(x)$};
    \node at (2.8,-2.7) {\footnotesize{ $x=1$}};
    \node at (2.8,-1.9) {\footnotesize{ $x=2$}};
    \node at (2.8,-0.9) {\footnotesize{ $x=3$}};
    \node at (2.8,-0.3) {\footnotesize{ $x=4$}};
  \end{tikzpicture}
  \caption{$f^3_j(x)\to 0$ as $x\to\infty$}
  \label{fig:1}
\end{figure}

\bibliographystyle{vancouver}
\bibliography{MonthlyReferences}

\begin{thebibliography}{1}

\bibitem{wright}
Wright EM.
\newblock Prouhet's 1851 Solution of the Tarry-Escott Problem of 1910.
\newblock The American Mathematical Monthly. 1959;66(3):199-201.

\bibitem{prouhet}
Prouhet E.
\newblock Memoire sur quelques relations entre les puissances des nombres.
\newblock C R Acad Sci. 1851;33(1):225.

\bibitem{Allouche}
Allouche JP, Shallit J.
\newblock The Ubiquitous {Prouhet-Thue-Morse} Sequence.
\newblock In: Ding C, Helleseth T, Niederreiter H, editors. Sequences and their
  Applications. Discrete Mathematics and Theoretical Computer Science. London:
  Springer; 1999. p. 1-16.
\newblock Available from: \url{https://doi.org/10.1007/978-1-4471-0551-0_1}.

\bibitem{knuth}
Graham RL, Knuth DE, Patashnik O.
\newblock Concrete mathematics : a foundation for computer science.
\newblock 2nd ed. Addison-Wesley; 1994.

\end{thebibliography}

%\begin{biog}
%\item[Author Name 1] Insert author bio here.
%\begin{affil}
%Department of Mathematics, University America, Washington DC 20036\\
%authorname@ua.edu
%\end{affil}
%\end{biog}

%\begin{biog}
%\item[Author Name 2] Insert author bio here.
%\begin{affil}
%Department of Mathematics, University America, Washington DC 20036\\
%authorname@ua.edu
%\end{affil}
%\end{biog}

\vfill\eject

\end{document}